\nc\cusp{{\mathrm{cusp}}}
\let\Q\bbQ
\nc\G{\Gamma}
\nc\omm{\boldsymbol{\omega}}
\nc\bull{\bullet}
\nc\Xbar{\overline X}\nc\Ybar{\overline Y}\nc\Ubar{\overline U}
\author{Cosmin Davidescu and Anthony J. Scholl}
\title{Extensions in the cohomology of Hilbert modular
  varieties}
\date{}
\begin{document}
\maketitle
\vspace*{8mm}
\section*{Introduction}

Let $S$ be a Hilbert modular variety (uncompactified) defined over
$\Q$ attached to a totally real field $F$, We assume $S$ is
nonsingular. The $\ell$-adic cohomology of $S$ carries a nontrivial
weight filtration, and one may consider 
the possible extensions of $\Gal(\Qbar/\Q)$-modules thereby arising.

If $F=\Q$ then $S$ is an open modular curve. The only possible
cohomology where a nontrivial extension could arise is degree $1$, and
by the Manin-Drinfeld principle, the $H^1$ is in fact split.

In dimension greater than 1, the cohomology in each degree has at most
two nonzero steps in the weight filtration.  The Manin-Drinfeld
principle still shows that cusp forms cannot give rise to nontrivial
extensions in the cohomology of $S$, but there is the possibility that
nontrivial extensions could arise between the boundary cohomology and
the part of the cohomology coming from $1$-dimensional automorphic
representations.  Caspar \cite{Cas} investigated this in the case of
Hilbert modular surfaces. He computed the extension classes that arise
for the $H^2$, and showed that they are nontrivial, giving an explicit
description via Kummer theory.

In this paper we consider the case of arbitrary $F$. We show (Theorems
\ref{thm:main1} and \ref{thm:main2}) that nontrivial extensions can
occur only in degree $2r-2$, and that in this case the extensions
which arise are nontrivial, and can again be described explicitly
using Kummer theory.

One motivation for this work is the ``plectic conjecture'' of
Nekov\'a\v{r} and the second author \cite{NSintro}. A consequence of
the results proved here is that the Galois action on $H^*(S)$ (for $S$
now a $GL_2(F)$-Shimura variety) extends to the ``plectic Galois
group''; this completes the proof of Proposition 6.6 of
\cite{NSintro}, as explained in the last section. We also indicate how
the same method gives a proof of the analogous statement
\cite[(3.3.11)]{NSPH} in Hodge theory.

After completing this paper we learnt of independent work by J.~
Silliman \cite{Si}, proving results equivalent to Theorem
\ref{thm:main1} and its Hodge-theoretic analogue.

\section{Hilbert modular varieties}

Throught the paper, $F$ will denote a fixed totally real number field
of degree $r>1$, $\frako_F$ its ring of integers, and
$\Sigma=\Hom_{\Q-\mathrm{alg}}(F,\Qbar)$ where $\Qbar$ is the
algebraic closure of $\Q$ in $\C$. (We do not fix a preferred
embedding of $F$ into $\Qbar$). For a field $k$ we write $\G_k$ for
its absolute Galois group (for some algebraic closure, which will be
clear from the context).

For any $k$-scheme $X$ (where $k\subset\Qbar$), we will generally write
$H^*(X,\Ql)=H^*_{\text{\'et}}(X\otimes_k\Qbar,\Ql)$ and
$H^*(X,\Q)=H^*(X(\C),\Q)$, and similar for compact supports, or
sheaves. (We make an exception to this convention in Proposition
\ref{prop:m+u}, where it would cause confusion.)

We let $G\subset R_{F/\Q}GL_2$ be the algebraic subgroup whose group
of $\Q$-points is $G(\Q)=\{ g\in GL_2(F) \mid \det g\in\Q^*\}$.  Until
the last section, $S$ will be a Hilbert-Blumenthal modular variety
over $\Q$ associated to some open subgroup $K\subset
G(\bbA_\Q^\infty)$. We assume that $K$ is sufficently small to ensure
that $S$ is smooth.

\subsection*{The minimal compactification}

This is a compactification
\[
S\mapright{j} S^* \mapleft{i}S^\infty
\]
where $S^*$ is normal and proper, and $S^\infty$ is
zero-dimensional. We have the long exact sequence
of cohomology:
\begin{equation}\label{eq:bdyseq}
  H^n_c(S,\Ql) \to H^n(S,\Ql)
  \to H^n(S^\infty,i^*Rj_*\Ql) \to \dots
\end{equation}
Write $H^n_!(S,\Ql)=\im(H^n_c(S,\Ql) \to H^n(S,\Ql))$ for the interior
cohomology, $H^n_\partial(S,\Ql)=H^n(S^\infty,i^*Rj_*\Ql)$ for the
boundary cohomology. The exact sequence is auto-dual, via Poincar\'e
duality betwen $H(S)$ and $H_c(S)$, and the duality on boundary
cohomology $H^n_\partial(S,\Ql)^\vee\simeq
H^{2r-1-n}_\partial(S,\Ql)(r)$.

The boundary cohomology is independent of the choice of
compactification; both it and the exact sequence \eqref{eq:bdyseq} can
be computed using singular cohomology of the Borel-Serre
compactification, as was first done by Harder \cite{HaSL2}, who showed
that for $n=1$, $2r-1$ one has
$H^n_!(S,\Q)=0$, and the sequence splits into short exact sequences

 --- for $2\le n<r$:
\begin{equation}\label{eq:excomp}
  0 \to H^{n-1}_\partial(S,\Q) \to 
  H^n_c(S,\Q) \to
  H^n_!(S,\Q)=H^n(S,\Q) \to 0
\end{equation}
--- for $r<n\le 2r-2$:
\[
0 \to H^n_c(S,\Q) =H^n_!(S,\Q)
\to H^n(S,\Q)
\to  H^n_\partial(S,\Q) \to 0
\]
 --- in middle degree: 
\[
\xymatrix
@R=10pt@C=10pt
{0\ar[r] & H^{r-1}_\partial(S,\Q)\ar[r] &
H^r_c(S,\Q) \ar[rr]\ar@{->>}[rd]& & H^r(S,\Q)\ar[r]&
  H^r_\partial(S,\Q) \ar[r] & 0
\\
&&& H^r_!(S,\Ql)\ar@{^{(}->}[ru]
}
\]
By the comparison isomorphism, the same holds for $\ell$-adic cohomology.

It is also shown in \cite{HaSL2} that for any $z\in S^\infty(\C)$ the
boundary cohomology at $z$ satisfies
\begin{align*}
H^1((Rj_*\Q)_z) & = \Hom(\frako_F^*,\Q)\\
H^n((Rj_*\Q)_z) & = \bigwedge^n H^1((Rj_*\Q)_z)\quad\text{for $1\le
                  n\le r-1$.}
\end{align*}

\subsection*{The toroidal compactification}

This is a smooth, projective compactification $S\inject{} \tilde S$
whose boundary $\tilde S^\infty$ is a divisor with strict normal
crossings. It depends on a choice of admissible cone decomposition
of the cone of totally positive elements $(F\otimes_\Q\R)_+
\subset F\otimes_\Q\R$
(see \cite[\S4]{Ra} or \cite[\S4.1.4]{Hi}). The boundary component $\tilde S^\infty_y$ over $y\in
S^\infty$ is the quotient $Z_y/\Delta_y$, where $Z_y$ is a reduced
scheme locally of finite type over $k(y)$, whose irreducible
components $Z_{y,\sigma}$ are smooth toric varieties, and $\Delta_y\subset
\frako_F^*$ is a torsion-free subgroup of finite index.

The varieties $Z_{y,\sigma}$ have vanishing $H^1$. So by Meyer--Vietoris
$H^1(Z_y)$ equals the $H^1$ of the nerve of the cover $\{Z_{y,\sigma}\}$.
This nerve, being the simplicial complex associated to the cone decomposition
of $(F\otimes_\Q\R)_+$, is contractible, so $Z_y$ has vanishing $H^1$.
It follows that $H^1(\tilde
S^\infty,\Ql)=H^0(S^\infty,\Ql)\otimes_\Q \Hom(\frako_F^*,\Q)$ as 
$\Gamma_\Q$-modules, and
that the natural homomorphism $H^1(\tilde S^\infty,\Ql) \to
H_\partial^1(S,\Ql)$ is an isomorphism, giving isomorphisms
\[
H^n_\partial(S,\Ql) \simeq H^0(S^\infty,\Ql)\otimes_\Q
\bigwedge^n\Hom(\frako_F^*,\Q)
\]
for $1\le n\le r-1$.

One also has 
$H^2_!(S,\Ql)=\im(H^2(\tilde S,\Ql) \to H^2(S,\Ql)) = W_2 H^2(S,\Ql)$ and so the exact
sequence \eqref{eq:excomp} for $n=2$ may be rewritten as
\begin{equation} \label{eq:altext}
  0 \to H^1(\tilde S^\infty,\Ql) \to H^2_c(S,\Ql) \to \im(H^2(\tilde
  S,\Ql) \to H^2(S,\Ql)) \to 0.
\end{equation}

Define, for any $y\in S^\infty(\Qbar)$, $\Pic^0 \tilde S^\infty_y =
\ker(\Pic \tilde S^\infty_y \to H^2(\tilde S^\infty_y,\Ql(1))$. One
then has:
\begin{lem}\label{lem:picbdy}
  \[
  \Pic^0 \tilde S^\infty_y = \ker(\Pic \tilde S^\infty_y\to \Pic Z_y)
  \simeq \Hom(\Delta_y, \Qbar^*).
  \]
\end{lem}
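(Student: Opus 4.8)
The plan is to exploit that $\pi\colon Z_y\to\tilde S^\infty_y=Z_y/\Delta_y$ is an \'etale Galois cover whose group $\Delta_y$ is free abelian of rank $r-1$ (a torsion-free finite-index subgroup of $\frako_F^*$), and to compare line bundles and their cycle classes through this cover, working throughout with the geometric objects over $\Qbar$. The first input I would record is the computation of global units $\mathcal O^*(Z_y)=\Qbar^*$: covering $Z_y$ by its affine toric charts, a global invertible function restricts on each chart to a scalar times a character $\chi^m$, and being invertible everywhere forces $m\in\sigma^\perp$ for every maximal cone $\sigma$; since these cones span the full-dimensional $(F\otimes_\Q\R)_+$ we get $m=0$, and as $Z_y$ is connected the global units are the constants $\Qbar^*$.

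Next I would establish the isomorphism with $\Hom(\Delta_y,\Qbar^*)$ by descent. A line bundle on $\tilde S^\infty_y$ whose pullback to $Z_y$ is trivial is the same as a $\Delta_y$-equivariant structure on $\mathcal O_{Z_y}$, i.e.\ a $1$-cocycle of $\Delta_y$ valued in $\mathcal O^*(Z_y)=\Qbar^*$; since $\Delta_y$ acts trivially on the constants, such a cocycle is just a homomorphism. Equivalently, the five-term exact sequence of the descent spectral sequence $H^p(\Delta_y,H^q(Z_y,\mathbf{G}_m))\Rightarrow H^{p+q}(\tilde S^\infty_y,\mathbf{G}_m)$ identifies $\ker(\Pic\tilde S^\infty_y\to\Pic Z_y)$ with $H^1(\Delta_y,\mathcal O^*(Z_y))=\Hom(\Delta_y,\Qbar^*)$. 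This yields the displayed isomorphism.

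It remains to identify this kernel with $\Pic^0$. For the inclusion $\ker(\Pic\to\Pic Z_y)\subseteq\Pic^0$ I would use divisibility: since $\Qbar^*$ is divisible and $\Delta_y$ is free abelian, $\Hom(\Delta_y,\Qbar^*)$ is a divisible group, so any $L$ in the kernel can be written as $L'^{\otimes\ell^n}$ with $L'$ again in the kernel; hence its cycle class lies in $\ell^n H^2(\tilde S^\infty_y,\mathbf{Z}_\ell(1))$ for every $n$. As $\tilde S^\infty_y$ is proper, this $H^2$ is a finitely generated $\mathbf{Z}_\ell$-module, so $\bigcap_n\ell^n H^2=0$ and the cycle class vanishes, i.e.\ $L\in\Pic^0$. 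For the reverse inclusion, suppose $\mathrm{cl}(L)=0$; then $\mathrm{cl}_{Z_y}(\pi^*L)=\pi^*\mathrm{cl}(L)=0$. Kummer theory gives $\Pic Z_y/\ell^n\hookrightarrow H^2(Z_y,\mu_{\ell^n})$, and since the irreducible components of $Z_y$ are smooth toric varieties $\Pic Z_y$ is torsion-free with $\bigcap_n\ell^n\Pic Z_y=0$; hence $\mathrm{cl}_{Z_y}$ is injective, so $\pi^*L=0$ and $L\in\ker(\Pic\to\Pic Z_y)$.

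The main obstacle I anticipate is the injectivity of the cycle class map $\mathrm{cl}_{Z_y}\colon\Pic Z_y\to H^2(Z_y,\Ql(1))$, which is the only genuinely geometric input: it rests on the toric description of $\Pic Z_y$ and requires handling the non-compact, infinite fan (so that torsion-freeness and $\ell$-adic separatedness survive passage to the inverse limit over finite subfans), whereas the computation $\mathcal O^*(Z_y)=\Qbar^*$ together with the descent and divisibility arguments are essentially formal. A secondary point to verify is the compatibility of $\mathrm{cl}$ with $\pi^*$ and with the Kummer boundary map, which justifies the degree bookkeeping above.
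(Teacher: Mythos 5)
Your proof is correct, and it reaches the key identification $\ker(\Pic\tilde S^\infty_y\to\Pic Z_y)=\Pic^0\tilde S^\infty_y$ by a genuinely different mechanism than the paper. The shared skeleton is the same: both arguments use the Cartan (descent) spectral sequence for the \'etale Galois cover $Z_y\to\tilde S^\infty_y$ to identify $\ker(\Pic\tilde S^\infty_y\to\Pic Z_y)$ with $H^1(\Delta_y,H^0(Z_y,\mathbb{G}_\mathrm{m}))=\Hom(\Delta_y,\Qbar^*)$, and both ultimately rest on the injectivity of the cycle class map on $\Pic Z_y$ (i.e.\ $\Pic^0 Z_y=0$), which you rightly single out as the only genuinely geometric input --- the paper asserts it with no more detail than you give, so you are on equal footing there, and your explicit computation of $H^0(Z_y,\mathbb{G}_\mathrm{m})=\Qbar^*$ via the toric charts fills in a point the paper leaves implicit. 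Where you diverge is in showing that this kernel lands inside $\Pic^0$: the paper runs a second Cartan spectral sequence with $\Ql(1)$ coefficients, observes that the induced map $H^1(\Delta_y,\Qbar^*)\to H^2(\Delta_y,\Ql)(1)$ must vanish because any element of its image is fixed by an open subgroup of $\G_\Q$ while $\Ql(1)$ has no such invariants, and concludes by the snake lemma; you instead argue that $\Hom(\Delta_y,\Qbar^*)$ is divisible while $H^2(\tilde S^\infty_y,\mathbb{Z}_\ell(1))$ is a finitely generated $\mathbb{Z}_\ell$-module, hence $\ell$-adically separated, so the cycle class of anything in the kernel is infinitely $\ell$-divisible and therefore zero. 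Your version has the advantage of making no use of the Galois action at all (it would work verbatim over any algebraically closed field of characteristic $\ne\ell$), at the cost of invoking properness and the finiteness theorem for \'etale cohomology; the paper's weight argument is shorter but is specific to the arithmetic situation. Both are complete modulo the same geometric input, so your proposal stands.
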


\begin{proof}
  By the above discussion of the toroidal boundary we have
  $H^0(Z_y,\Ql)=\Ql$ and  $H^1(Z_y,\Ql)=0=\Pic^0 Z_y$.
  So from the Cartan spectral
  sequences for $Z_y \to \tilde S^\infty_y$ with coefficients in $\mathbb{G}_\mathrm{m}$
  we obtain the exact rows of the commutative diagram
  \[
  \xymatrix
  @R=20pt@C=20pt
  {  0\ar[r]& H^1(\Delta_y,\Qbar^*)\ar[r]\ar@{.>}[d]& \Pic \tilde
    S^\infty_y\ar[r]\ar[d] &
    (\Pic Z_y)^{\Delta_y}\ar@{_{(}->}[d] \\
    0 \ar[r] & H^2(\Delta_y,\Ql)(1) \ar[r]& H^2(\tilde
    S^\infty_y,\Ql(1)) \ar[r] &
    H^2(Z_y,\Ql(1))^{\Delta_y}     
  }
  \]
  in which the right hand vertical arrow is injective. If $x\in
  H^1(\Delta_y,\Qbar^*)$ then its image in $H^2(\Delta_y,\Ql)(1)$ is
  fixed by an open subgroup of $\Gamma_\Q$, hence the left hand
  vertical map is zero. The result then follows by the snake lemma.
\end{proof}

\subsection*{The line bundles $\calL_\tau$}

For each $\tau\colon F \to L\subset \C$ there is an invertible sheaf
$\calL_\tau$ on $S\otimes L$, with the property that the sections of
$\bigotimes\calL_\tau^{k_\tau}$ are the modular forms of weight
$(k_\tau)$ (see \cite[6.9(b)]{Ra}). If $L=\C$ this is the usual line
bundle on $S(\C)$ associated to the factor of automorphy $(\gamma,z)
\mapsto (c_\tau z_\tau + d_\tau)$ \cite[6.15]{Ra}, and extends to the
toroidal compactification in a unique way such that the pullback to
each $Z_y$ is trivial (see \cite[\S II.7]{vdG}, which is for the case
$r=2$, but the general case is the same). By its very definition and
the previous lemma, the restriction of $\calL_\tau$ to $\tilde
S^\infty_y$, $y\in S^\infty(\Qbar)$, lies in $\Pic^0\tilde S^\infty_y$
and can be identified (up to a sign independent of $\tau$) with the
homomorphism $\tau \in H^1(\Delta_y,\Qbar^*)\subset
\Hom(F^*,\Qbar^*)$.

By definition, the Galois action on the line bundles is given by
$\sigma_*\calL_\tau =\calL_{\sigma\tau}$ for $\sigma\in\Gamma_\Q$.

Write $\eta_\tau\in H^2_!(S,\Ql(1))$ for the cohomology class of
$\calL_\tau$. The classes $\eta_\tau$ are linearly independent and
therefore generate a subspace  isomorphic to the permutation
representation $\Ql[\Sigma]$.

\section{The extension classes}

For $I\subset \Sigma$ with $0<\#I=m<r$, let $\eta_I=\bigwedge_{\tau\in
  I}\eta_\tau\in H^2_!(S,\Ql(1))$, and let
\[
H^{2m}_A(S,\Ql)= \sum_{\#I=m} H^0(S,\Ql)\cup \Ql(-m)\eta_I\subset
H^{2m}_!(S,\Ql)
\]
 From \cite{HaSL2} one has the following description of the interior cohomology.
\begin{itemize}
\item For $n\ne r$ odd or $n=2r$, $H^n_!(S,\Ql)=0$.
\item For $0<n=2m<2r$, $n\ne r$, $H^{2m}_!(S,\Ql)=H^{2m}_A(S,\Ql)$.
\item If $r=2m$ is even then $H^r_!(S,\Ql)=H^r_A(S,\Ql)\oplus
  H^r_{\cusp}(S,\Ql)$, a direct sum of $\Gal(\Qbar/\Q)$-modules stable
  under the Hecke algebra.
\end{itemize}

\begin{thm} \label{thm:main1}
  (i) For $2<n\le r$, there is a unique splitting of $\Gal(\Qbar/\Q)$-modules:
  $H^n_c(S,\Ql)= H^n_!(S,\Ql)\oplus H^{n-1}_\partial(S,\Ql)$. 

  (ii) For $r \le n< 2r-2$, there is a unique splitting of $\Gal(\Qbar/\Q)$-modules:
  $H^n(S,\Ql)= H^n_!(S,\Ql)\oplus H^n_\partial(S,\Ql)$. 
\end{thm}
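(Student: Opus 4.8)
My plan is to reduce everything to part~(ii) by duality, settle uniqueness by a weight argument, and reduce the existence of the splitting to the vanishing of an explicit class attached to units.

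\emph{Reduction and weights.} Since the sequence \eqref{eq:bdyseq} is auto-dual, the extension of part~(i) in degree $n$ is Poincar\'e-dual to the extension of part~(ii) in degree $2r-n$ (using $H^m_!(S)^\vee\simeq H^{2r-m}_!(S)(r)$ and $H^m_\partial(S)^\vee\simeq H^{2r-1-m}_\partial(S)(r)$); as $n$ ranges over $(2,r]$ the value $2r-n$ ranges over $[r,2r-2)$, so it suffices to prove part~(ii). There the outer terms are pure of known weights. The interior cohomology $H^n_!(S,\Ql)$ is pure of weight $n$, and for $n\ne r$ it is of Artin--Tate type, namely $\Ql(-n/2)$ tensored with an Artin representation, by the description $H^n_!=H^n_A$ in terms of the $\eta_I$; at $n=r$ there is in addition a cuspidal summand, which splits off by the Manin--Drinfeld principle and may be discarded. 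The boundary term $H^n_\partial(S,\Ql)$, for $r\le n<2r-2$, is pure of weight $2r$: its dual is $H^{2r-1-n}_\partial(S,\Ql)(r)$ with $1\le 2r-1-n\le r-1$, and the latter is $H^0(S^\infty,\Ql)\otimes\bigwedge^{2r-1-n}\Hom(\frako_F^*,\Q)$, which is pure of weight $0$ and of Artin type (being the $H^1$-type classes of the contractible nerve of the toroidal boundary modulo $\Delta_y$).

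\emph{Uniqueness.} Any two splittings of $0\to H^n_!\to H^n\to H^n_\partial\to 0$ differ by an element of $\Hom_{\Gamma_\Q}(H^n_\partial,H^n_!)$, and this group vanishes because $H^n_\partial$ and $H^n_!$ are pure of distinct weights $2r$ and $n<2r$.

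\emph{Existence.} The extension class lies in $\Ext^1_{\Gamma_\Q}(H^n_\partial,H^n_!)=H^1(\Gamma_\Q,\Ql(r-n/2)\otimes\rho)$ for an Artin representation $\rho$, and I must show it vanishes. The decisive numerical point is that the Tate twist $r-n/2$ is $\ge 2$ precisely when $n<2r-2$, whereas the borderline twist $r-n/2=1$ occurs exactly in the excluded degree $2r-2$; thus the theorem reflects the distinction between the twist $\Ql(1)$, where the Kummer theory of the units $\frako_F^*$ produces a genuine extension, and the twists $\Ql(j)$ with $j\ge2$. I would establish the vanishing by tracing the class through the toroidal boundary: by Lemma~\ref{lem:picbdy} and the identification of $\calL_\tau|_{\tilde S^\infty_y}$ with $\tau\in H^1(\Delta_y,\Qbar^*)$, the mixed structure governing these extensions is the Kummer ($1$-motive) structure of $\Pic^0$ of the boundary, whose only nonsplit sub-extensions have weight gap $2$. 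Concretely, using the compatibility of the connecting homomorphism of \eqref{eq:bdyseq} with cup product by the algebraic classes $\eta_\tau$, one writes the class as the cup product of the primitive degree-$2$ unit class with a Tate class that, for $n<2r-2$, lies in $H^0(\Gamma_\Q,N)$ for a Tate--Artin module $N$ of nonzero weight; this $H^0$ vanishes, whence the class is zero.

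\emph{Main obstacle.} The hard part is this last identification: proving that the extension class is decomposable in exactly this way, i.e.\ verifying the multiplicativity of the connecting map and pinning down the primitive class as the Kummer class of a unit. This is where the explicit toroidal description of the previous section, together with careful bookkeeping between $H_c$, $H$, and the boundary cohomology, are needed; the final vanishing, once decomposability is in hand, is formal.
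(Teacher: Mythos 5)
Your reduction to part (ii) by Poincar\'e duality and your uniqueness argument via the weight gap between $H^n_\partial(S,\Ql)$ (pure of weight $2r$) and $H^n_!(S,\Ql)$ (pure of weight $n<2r$) are both correct and agree in substance with the paper, which runs the duality the other way and proves (i). The gap is in the existence step, which is the entire content of the theorem. You reduce it to the vanishing of a class in $H^1(\Gamma_\Q,\Ql(j)\otimes\rho)$ with $j=r-n/2\ge 2$ and $\rho$ an Artin representation, and you present the inequality $j\ge 2$ as ``the decisive numerical point''; but these groups do not vanish for $j\ge 2$ in general (e.g.\ $H^1(\Gamma_\Q,\Ql(j))\ne 0$ for odd $j\ge 3$ by Soul\'e), so no amount of twist-counting can finish the argument --- one must show that this particular class is zero. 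Your proposed remedy, writing the class as a cup product of a ``primitive degree-$2$ unit class'' with a Tate class lying in an $H^0(\Gamma_\Q,N)$ that vanishes for weight reasons, is exactly the step you concede you cannot carry out, and as stated it is not even a well-posed decomposition: you would need to specify in which groups the two factors live and why the connecting map is multiplicative in the required sense. As it stands the proof is incomplete at its crucial point.

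The paper's mechanism is more concrete and never computes the extension class. Since $H^{2m}_A(S,\Ql)$ is spanned by the $m$-fold products $\eta_I=\bigwedge_{\tau\in I}\eta_\tau$ of the degree-$2$ classes of the line bundles $\calL_\tau$, one chooses arbitrary lifts $\eta^c_\tau\in H^2_c(S,\Ql(1))$ of the $\eta_\tau$ and observes that their $m$-fold cup products in $H^{2m}_c$ are independent of the choice of lift: the ambiguity lies in the image of $\partial^1\colon H^1_\partial\to H^2_c$, and the composite $H^1_\partial\otimes H^2_c\to H^2_c\otimes H^2_c\to H^4_c$ vanishes because the cup product $H^2_c\otimes H^2_c\to H^4_c$ factors through $H^2\otimes H^2_c$, while $H^1_\partial\to H^2_c\to H^2$ is zero by exactness. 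Hence $\cup^m H^2_c(S,\Ql)$ is a canonically defined, therefore Galois-stable, subspace of $H^{2m}_c(S,\Ql)$ mapping isomorphically onto $H^{2m}_A(S,\Ql)$; this is the desired splitting once Manin--Drinfeld disposes of $H^r_\cusp$ in middle degree. If you want to salvage your outline, this cup-product factorization is the precise form of the ``multiplicativity of the connecting map'' that you identified as the missing ingredient.
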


\begin{proof}
  As (i) and (ii) are equivalent by Poincar\'e duality, it is enough
  to prove (i). Because  $H^{n-1}_\partial(S,\Ql)$ is pure of weight
  $0$, there is at most one splitting.

  If $n<r$ is odd, there is nothing to prove as $H^n_!=0$ for $n<r$. 
  
  If $n=r$ then by Manin-Drinfeld principle, the extension splits over
  $H^r_\cusp$. So it is enough in every case to split the extension
  over $H^n_A\subset H^n_!$. Therefore (i) will follow from:

  \begin{prop}
    Let $1<m\le r/2$. Let $\cup^m H^2_c(S,\Ql)\subset H^{2m}_c(S,\Ql)$
    be the image of $\otimes^m H^2_c(S,\Ql)$ under the
    cup product. Then the composite
    \[
    \cup^m H^2_c(S,\Ql) \inject{} H^{2m}_c(S,\Ql) \surject{}
    H^{2m}_!(S,\Ql)
    \]
    is an isomorphism if $m<r/2$; for $r=2m$ even, it gives an
    isomorphism
    \[
    \cup^{r/2} H^2_c(S,\Ql)\isomarrow H^r_A(S,\Ql) \subset
    H^r_!(S,\Ql).
    \]
\end{prop}

\begin{proof}
  It is enough to check that the composite
  \[
  \textstyle\bigotimes^m H^2_c(S,\Ql)  \mapright{\cup} H^{2m}_c(S,\Ql)
  \to H^{2m}(S,\Ql) 
  \]
  has image $H^{2m}_A$, and is zero on elements $\otimes x_i$ where some
  $x_i$ is in the image of the boundary cohomology. The first
  assertion is clear as the cup product map $\cup\colon\bigotimes^m H^2_A(S,\Ql) \surject{}
  H^{2m}_A(S,\Ql)$ is surjective.  As for the second, we have a commutative diagram:
  \[
  \xymatrix
  @R=20pt@C=30pt
   { H^1_\partial(S,\Ql)\otimes H^2_c(S,\Ql) \ar[r]^-{\partial^1\otimes
      id} \ar[rd]^0 &
    H^2_c(S,\Ql)\otimes H^2_c(S,\Ql) \ar[r]^-\cup \ar[d] & H^4_c(S,\Ql) \\
    & H^2(S,\Ql)\otimes H^2_c(S,\Ql)\ar[ru]^-\cup
  }
  \]
  and therefore the composite of the horizontal arrows
  $H^1_\partial\otimes H^2_c \to H^4_c$ is zero. Therefore, if
  $\eta_\tau^c\in H^2_c(\Ql(1))$ are any classes lifting $\eta_\tau\in
  H^2_A(S,\Ql(1))$, then
  \[
  \cup^m H^2_c(S,\Ql(1)) = \cup^m \langle\, \{\eta^c_\tau\}\, \rangle
  =\bigoplus_{\#I=m} (\textstyle\bigwedge_{\tau\in I} \eta_\tau^c)\Ql
  \isomarrow H^{2m}_A(S,\Ql(m)).  \qedhere
  \]
\end{proof}
\def\qedsymbol{}
\vspace*{-6ex}
\end{proof}
For the second result we need some more notation. Consider the Kummer
homomorphism $\kappa_F\colon F^* \to H^1(\G_F,\Ql(1))$. Composing with
the isomorphism given by Shapiro's lemma:
\[
H^1(\G_F,\Ql(1)) \isomarrow H^1(\G_\Q,\Ql^\Sigma(1))
\]
(which does not depend on a choice of embedding $F\subset \Qbar$) we
obtain a homomorphism
\[
\kappa'_F \colon F^* \to H^1(\G_\Q,\Ql^\Sigma(1))
\]
inducing an isomorphism between the completed tensor product
$F^*\hat\otimes\Ql$ and the right hand side.

The morphism of $0$-dimensional schemes $\varepsilon \colon S^\infty \to
\pi_0(S)$ gives a $\G_\Q$-equivariant map
\[
\varepsilon^*\colon H^0(S,\Ql) \to H^0(S^\infty,\Ql).
\]

\begin{thm}\label{thm:main2}
  Assume $r>2$. Consider the extension
  \[
  \begin{CD}
    0 @>>> H^1_\partial(S,\Ql) @>>> H^2_c(S,\Ql) @>>> H^2_!(S,\Ql)
    @>>> 0 \\[-1ex]
     && \Vert && && \Vert \\[-0.5ex]
    @. \hspace*{-15mm}\Hom(\frako_F^*, H^0(S^\infty,\Ql))\hspace*{-15mm} @. @.
    \hspace*{-15mm}H^0(S,\Ql)\otimes \Ql[\Sigma](-1)\hspace*{-15mm}
  \end{CD}
  \]
  Its class is the image of $\varepsilon^*\otimes \kappa'_F$ under the
  map
  \begin{gather*}
    \Hom_{\G_\Q}(H^0(S,\Ql),H^0(S^\infty,\Ql)) \otimes
    \Hom(\frako_F^*,H^1(\G_\Q,\Ql^\Sigma(1))) \\
    \downarrow \\
    \Hom(\frako_F^*,
    H^1(\G_\Q,\Hom(H^0(S,\Ql),H^0(S^\infty,\Ql)\otimes\Ql^\Sigma(1)))
    \\ 
    \Vert \\
    \Ext^1_{\G_\Q}(H^2_!(S,\Ql), H^1_\partial(S,\Ql))\,.
  \end{gather*}
\end{thm}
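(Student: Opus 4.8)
The plan is to realise the extension geometrically on the toroidal compactification and then to identify the resulting obstruction cocycle with a Kummer class. Since $r>2$ we have $2\ne r$, so $H^2_!(S,\Ql)=H^2_A(S,\Ql)=H^0(S,\Ql)\otimes\Ql[\Sigma](-1)$ is spanned over $H^0(S,\Ql)$ by the classes $\eta_\tau$, $\tau\in\Sigma$. By \eqref{eq:altext} the extension in question is the one coming from the localization sequence of the pair $(\tilde S,\tilde S^\infty)$, with connecting map $\delta\colon H^1(\tilde S^\infty,\Ql)\to H^2_c(S,\Ql)$ whose image is the subobject $H^1_\partial(S,\Ql)$. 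For each $\tau$ the line bundle $\calL_\tau$ (defined over $\tau(F)$) has a class $[\calL_\tau]\in H^2(\tilde S,\Ql(1))$ lifting $\eta_\tau$; since $\calL_\tau$ restricts on every boundary component to an element of $\Pic^0\tilde S^\infty_y$, its image in $H^2(\tilde S^\infty,\Ql(1))$ vanishes, so $[\calL_\tau]$ lifts to a class $\eta^c_\tau\in H^2_c(S,\Ql(1))$, well defined modulo $\im\delta$. First I would fix an arbitrary such lift for each $\tau$ and, using $\sigma_*\calL_\tau=\calL_{\sigma\tau}$, represent the extension class by the cocycle $\sigma\mapsto\bigl(\eta_\tau\mapsto\sigma(\eta^c_{\sigma^{-1}\tau})-\eta^c_\tau\bigr)$, whose values lie in $H^1_\partial(S,\Ql)=\im\delta$ because $\sigma[\calL_{\sigma^{-1}\tau}]=[\calL_\tau]$.

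Next I would identify this obstruction with a Kummer class. The content is a compatibility between the geometric connecting map $\delta$ and arithmetic Kummer theory: the failure of $\eta^c_\tau$ to be $\G_\Q$-equivariant is measured by the $\ell$-adic realisation of the $\Pic^0$-valued restriction $\calL_\tau|_{\tilde S^\infty}$. To make this precise I would assemble a commutative diagram relating the Kummer sequences $\Pic(\tilde S)/\ell^n\hookrightarrow H^2(\tilde S,\mu_{\ell^n})$ and $\Pic^0\tilde S^\infty_y/\ell^n\hookrightarrow H^1(\tilde S^\infty_y,\mu_{\ell^n})$ to the localization sequence of the pair, then pass to the inverse limit over $n$ and tensor with $\Ql$. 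This realises the cocycle above as the image of $\calL_\tau|_{\tilde S^\infty_y}$ under the Kummer connecting map $\Pic^0\tilde S^\infty_y\to H^1(\G_\Q,\Hom(\Delta_y,\Ql(1)))$; concretely, writing $\Pic^0\tilde S^\infty_y=\Hom(\Delta_y,\Qbar^*)$ as in Lemma \ref{lem:picbdy}, a homomorphism $\phi$ goes to the class $u\mapsto\kappa(\phi(u))$, where $\kappa$ is the $\ell$-adic Kummer map on $\Qbar^*$.

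It then remains to compute this class explicitly and to match the chain of natural maps in the statement. By the description following Lemma \ref{lem:picbdy}, $\calL_\tau|_{\tilde S^\infty_y}$ is the homomorphism $u\mapsto\tau(u)$ on $\Delta_y\subset\frako_F^*$, so its Kummer class is $u\mapsto\kappa(\tau(u))\in H^1(\G_\Q,\Ql(1))$, which is precisely the $\tau$-component of $\kappa'_F$ restricted to $\frako_F^*$ under the Shapiro isomorphism $H^1(\G_F,\Ql(1))\simeq H^1(\G_\Q,\Ql^\Sigma(1))$. Assembling over $\tau\in\Sigma$ recovers the $\Ql[\Sigma](-1)$-structure on $H^2_!(S,\Ql)$, while the map $\varepsilon^*\colon H^0(S,\Ql)\to H^0(S^\infty,\Ql)$ records which cusps bound each component of $S$ and so transports the coefficients from $H^0(S,\Ql)$ to $H^0(S^\infty,\Ql)$, matching the identification $H^1_\partial(S,\Ql)=\Hom(\frako_F^*,H^0(S^\infty,\Ql))$. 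Together these reproduce $\varepsilon^*\otimes\kappa'_F$ and identify it, via the displayed composite of natural maps, with the computed extension class.

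The main obstacle is the middle step: establishing the precise compatibility between $\delta$ and the Kummer connecting map, with all Tate twists, signs, and the Shapiro identification correctly aligned. The delicate point is that each $\calL_\tau$ is defined only over $\tau(F)$, so the $\G_\Q$-cocycle genuinely arises by descent from $F$ to $\Q$; verifying that this descent is exactly encoded by $\kappa_F$ followed by Shapiro's lemma, uniformly over the boundary points $y\in S^\infty(\Qbar)$ and compatibly with the $\Delta_y\subset\frako_F^*$ of finite index, is where the real work lies.
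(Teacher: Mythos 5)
Your overall strategy coincides with the paper's: use the alternative description \eqref{eq:altext} of the extension via the toroidal compactification, lift the Chern classes of the $\calL_\tau$ to $H^2_c$ using the vanishing of $\calL_\tau|_{\tilde S^\infty}$ in $H^2(\tilde S^\infty,\Ql(1))$, and identify the resulting obstruction cocycle with the Kummer class of $\tau|_{\frako_F^*}$ via the isomorphism $\Pic^0\tilde S^\infty_y\simeq\Hom(\Delta_y,\Qbar^*)$ of Lemma \ref{lem:picbdy}. The endgame (reading off $\varepsilon^*\otimes\kappa'_F$ from the fact that $\calL_\tau|_{\tilde S^\infty_y}$ is the homomorphism $\tau$) also matches.

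However, the step you flag as ``the main obstacle'' is exactly where the paper's proof has its substance, and your sketch of it is not yet a proof; moreover the formalism you propose is slightly off. The Kummer sequence sends $\Pic$ to $H^2(-,\mu_{\ell^n})$, not to an $H^1$; the class of $i^*\calL_\tau$ naturally lives in $\Fil^1 H^2(\tilde S^\infty_y,\Ql(1))$ (continuous \'etale cohomology over the base field), and what turns it into an $H^1(\G_k,-)$-valued invariant is the Hochschild--Serre filtration, i.e.\ the $\ell$-adic Abel--Jacobi map $\Pic^0 Y\to H^1(\G_k,H^1(\Ybar,\Ql)(1))$. The compatibility you need is precisely Proposition \ref{prop:m+u}: the extension class $e_{\calL}$ equals the image of $i^*\calL$ under $AJ_Y$ followed by projection to $\coker\bigl(H^1(\Xbar,\Ql)\to H^1(\Ybar,\Ql)\bigr)$. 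The paper establishes this not by a finite-level diagram chase but by applying Jannsen's compatibility \cite[9.5]{Ja} to the triangle $R\Gamma(\Xbar,\Ql)\to R\Gamma(\Ybar,\Ql)\to R\Gamma_c(\Ubar,\Ql)[1]$, producing a commutative pentagon relating the Hochschild--Serre edge maps for $X$, $Y$ and $U$; this is the rigorous form of your claim that the failure of equivariance of the lifts $\eta^c_\tau$ is measured by $\calL_\tau|_{\tilde S^\infty}$. Two further points would streamline your bookkeeping: since the coefficient module $\Hom(H^2_!,H^1_\partial)$ has nonzero weight, restriction to an open subgroup is injective on $\Ext^1$, so one may first pass to $\G_k$ for $k$ trivializing $\pi_0(S\otimes\Qbar)$ and $S^\infty(\Qbar)$, which eliminates the permutation cocycle $\sigma(\eta^c_{\sigma^{-1}\tau})$ and reduces everything to one component and one $\tau$ at a time; and the identification of $AJ$ on $\Pic^0\tilde S^\infty_y\simeq H^1(\Delta_y,k^*)$ with $H^1(\kappa_k)$ is then a direct consequence of the Cartan spectral sequence for $Z_y\to\tilde S^\infty_y$, which is the cleanest way to handle the descent from $F$ to $\Q$ that you correctly identify as delicate.
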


\begin{rems}
(i) By duality, the same class classifies the extension in cohomology
without support
\[
0 \to H^{2r-2}_!(S,\Ql) \to H^{2r-2}(S,\Ql) \to
H^{2r-2}_\partial(S,\Ql) \to 0.
\]
(ii) The analogous result for $r=2$ is proved in \cite{Cas} by a different
method; the same proof as given below also works in this case with
minor modification.
\end{rems}

\begin{proof} The extension class is determined by its restriction to
  any open subgroup of $\G_\Q$. Let $k\subset \Qbar$ be a number field
  containing a Galois closure of $F$, for which $\G_k$ acts trivially
  on $\pi_0(S\otimes\Qbar)$ and $S^\infty(\Qbar)$. For each
  connected component $S'\subset S\otimes_\Q k$ and for each
  $\tau\in\Sigma=\Hom_{\Q-\mathrm{alg}}(F,k)$,  consider the pullback $E(S',\eta)$
  in the diagram of $\G_k$-modules
  \[
  \begin{CD}
    0 @>>> H^1_\partial(S',\Ql) @>>> H^2_c(S',\Ql) @>>> H^2_!(S',\Ql)
    @>>> 0 \\[-1ex]
    && \Vert && \uparrow && \cup \\[-0.5ex]
    0 @>>> \Hom(\frako_F^*, H^0(S^{\prime\infty},\Ql)) @>>> E(S',\eta)
    @>>> \Ql(-1)\eta_\tau @>>> 0
  \end{CD}
  \]
  It is then enough to check that the extension class of each
  $E(S',\eta)$ in 
  \[
  \Ext^1_{\G_k}(\Ql(-1), \Hom(\frako_F^*, H^0(S^{\prime\infty},\Ql)))
  = \Hom\bigl(\frako^*_F, H^1(\G_k, \Ql(1))^{S^{\prime\infty}}\bigr)
  \]
  is (up to sign independent of $S'$ and $\eta$) given by the
  composite homomorphism
  \[
  \frako^*_F \xrightarrow{\tau_*\circ\kappa_F} H^1(\G_k,\Ql(1))
  \inject{\mathrm{diag}} H^1(\G_k, \Ql(1))^{S^{\prime\infty}}
  \]
  For this, we use the alternative description \eqref{eq:altext} of
  the extension, which then puts us in the following general
  situation. Let $k$ be any field of characteristic different from
  $\ell$, $X/k$ smooth and proper, $i\colon Y \inject{} X$ the
  inclusion of a reduced divisor, and $U=X\setminus Y$. To avoid ambiguity
  we temporarily change notation in order to distinguish between the
  $\ell$-adic cohomology of $\Xbar=X\otimes_k\bar k$ and that of $X$,
  and likewise for $Y$.

  Let $\calL\in\Pic X$ such that $0=cl_{\Ybar}(i^*\calL)\in
  H^2(\Ybar,\Ql(1))$.   We then obtain by pullback an
  extension $E_\calL$ of $\G_k$-modules:
  \[
  \xymatrix
  @R=20pt@C=15pt
  {
    0\ar[r]& \coker\bigl( H^1(\Xbar,\Ql) \to
    H^1(\Ybar,\Ql)\bigr)\ar[r] \ar@{=}[d]&
    H^2_c(\Ubar,\Ql) \ar[r]& H^2(\Xbar,\Ql) \ar[r]^-{i^*} & H^2(\Ybar,\Ql)
    \\
    0 \ar[r]& \coker\bigl( H^1(\Xbar,\Ql) \to H^1(\Ybar,\Ql)\bigr)\ar[r]
    &
    E_\calL \ar[r] \ar[r] \ar@{.>}[u]  &
    \Ql(-1)\ar[r]\ar[u]^{cl_{\Xbar}(\calL)}
    \ar[ur]_-{cl_{\Ybar}(\calL)=0}  &  0
  }
  \]
  and thus an extension class $e_\calL\in H^1(\G_k,\coker (
  H^1(\Xbar,\Ql) \to H^1(\Ybar,\Ql))(1))$. 
  
  \begin{prop} \label{prop:m+u}
    $e_\calL$ equals the image
    of $i^*\calL$ under the composite map
    \[
    \Pic^0 Y \xrightarrow{AJ_Y} H^1(\G_k,H^1(\Ybar,\Ql)(1)) \to
    H^1(\G_k,\coker (
    H^1(\Xbar,\Ql) \to H^1(\Ybar,\Ql))(1))
    \]
    where $AJ_Y$ is the $\ell$-adic Abel-Jacobi map.
  \end{prop}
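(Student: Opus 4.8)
The plan is to reduce the proposition to a single compatibility between a Galois connecting map and a Hochschild--Serre edge map, transported through the localization sequence, and then to verify that compatibility at the level of cochains. First I would translate $e_\calL$ into a connecting homomorphism. Since $cl_{\Ybar}(i^*\calL)=0$, the invariant class $cl_{\Xbar}(\calL)$ lies in $K:=\ker\bigl(i^*\colon H^2(\Xbar,\Ql(1))\to H^2(\Ybar,\Ql(1))\bigr)$, and by construction $E_\calL$ is the pullback, along $cl_{\Xbar}(\calL)\colon\Ql(-1)\to K$, of the short exact sequence of $\G_k$-modules $0\to M(1)\to H^2_c(\Ubar,\Ql(1))\to K(1)\to 0$ extracted from the geometric localization sequence, where $M=\coker\bigl(H^1(\Xbar,\Ql)\to H^1(\Ybar,\Ql)\bigr)$. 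By the standard identification of pullback extensions with connecting maps, $e_\calL=\delta\bigl(cl_{\Xbar}(\calL)\bigr)$, where $\delta\colon H^0(\G_k,K(1))\to H^1(\G_k,M(1))$ is the connecting homomorphism of that sequence. Since $M$ is a quotient of $H^1(\Ybar,\Ql)$ via the canonical surjection $\mathrm{pr}$, the composite $\mathrm{pr}_*\circ AJ_Y$ also takes values in $H^1(\G_k,M(1))$, so the assertion is an equality in this group.

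Next I would organize everything through the localization triangle. Writing $C_U=R\Gamma_c(\Ubar,\Ql(1))$, $C_X=R\Gamma(\Xbar,\Ql(1))$ and $C_Y=R\Gamma(\Ybar,\Ql(1))$ as objects of the derived category of $\G_k$-modules, these fit in a distinguished triangle $C_U\to C_X\xrightarrow{i^*}C_Y\to C_U[1]$. Applying $R\Gamma(\G_k,-)$ recovers the arithmetic cohomology of $U,X,Y$ over $k$, its long exact sequence, and the Hochschild--Serre spectral sequences $H^p(\G_k,H^q)\Rightarrow H^{p+q}$, all functorially. Because $\calL$ is defined over $k$, it carries an arithmetic cycle class $cl_X(\calL)\in H^2(X,\Ql(1))$ whose image in $H^0(\G_k,H^2(\Xbar,\Ql(1)))$ is $cl_{\Xbar}(\calL)$ and which satisfies $i^*cl_X(\calL)=cl_Y(i^*\calL)$. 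As the image of $cl_Y(i^*\calL)$ in $H^0(\G_k,H^2(\Ybar,\Ql(1)))$ is $cl_{\Ybar}(i^*\calL)=0$, this class lies in $F^1H^2(Y,\Ql(1))$, and its image under the edge map $\mathrm{gr}^1\colon F^1H^2(Y,\Ql(1))\to H^1(\G_k,H^1(\Ybar,\Ql(1)))$ is, by the very definition of the $\ell$-adic Abel--Jacobi map for a homologically trivial divisor, precisely $AJ_Y(i^*\calL)$.

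It then suffices to establish the compatibility $\delta\circ\mathrm{gr}^0=\mathrm{pr}_*\circ\mathrm{gr}^1\circ i^*$ on the subspace of $H^2(X,\Ql(1))$ consisting of classes whose $\mathrm{gr}^0$-part lies in $K$. Evaluating this identity at $cl_X(\calL)$ yields $e_\calL=\delta(cl_{\Xbar}(\calL))=\mathrm{pr}_*AJ_Y(i^*\calL)$, which is the proposition. Note that both sides are genuinely controlled by the same arithmetic class: $\delta\circ\mathrm{gr}^0$ uses only the invariant part $cl_{\Xbar}(\calL)$ of $cl_X(\calL)$, while $\mathrm{pr}_*\circ\mathrm{gr}^1\circ i^*$ uses the restriction $cl_Y(i^*\calL)=i^*cl_X(\calL)$, whose vanishing in $\mathrm{gr}^0$ is equivalent to $cl_{\Xbar}(\calL)\in K$.

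The hard part will be proving this compatibility, since it links the Galois connecting map $\delta$ of a short exact sequence of geometric cohomology modules with the localization boundary of the triangle combined with the $\mathrm{gr}^1$ Hochschild--Serre edge. I would prove it by choosing a functorial (Godement) resolution computing $R\Gamma(\G_k,-)$, so that the triangle is represented by an honest short exact sequence of complexes of cochains; in that model both sides are computed by the same snake-lemma recipe, in which a class with $\G_k$-invariant $\mathrm{gr}^0$-part $\kappa=cl_{\Xbar}(\calL)\in K$ is lifted to a cochain on $U$, and the failure of the lift to be $\G_k$-invariant produces on one hand the cocycle $\sigma\mapsto(\sigma-1)\tilde\kappa$ representing $\delta(\kappa)$, and on the other hand, after applying $i^*$ and the boundary of the triangle, the class $\mathrm{pr}_*AJ_Y(i^*\calL)$. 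The genuine subtleties are the filtration-degree bookkeeping --- an invariant class in $\mathrm{gr}^0$ on $X$ becomes, after restriction to $Y$, a class in $F^1$ whose $\mathrm{gr}^1$ is exactly the connecting obstruction --- together with pinning down the signs coming from the rotation of the triangle so as to obtain the asserted equality; the remaining ingredients (existence of the arithmetic cycle class, the fact that $\mathrm{gr}^1$ is well defined into $H^1(\G_k,H^1(\Ybar,\Ql(1)))$ with image in $\ker d_2$, and the identification with $AJ_Y$) are standard.
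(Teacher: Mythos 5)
Your proposal is correct and follows essentially the same route as the paper: both reduce the statement to the compatibility between the Galois connecting homomorphism of the geometric localization sequence (which computes $e_\calL$ from $cl_{\Xbar}(\calL)$) and the Hochschild--Serre edge map applied to $i^*cl_X(\calL)$ (which computes $AJ_Y(i^*\calL)$). The only difference is that the paper obtains this compatibility as a commutative pentagon by citing \cite[9.5]{Ja} applied to the triangle $R\Gamma(\Xbar,\Ql)\to R\Gamma(\Ybar,\Ql)\to R\Gamma_c(\Ubar,\Ql)[1]$, whereas you propose to verify it directly at the cochain level.
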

  
Recall that $AJ_Y$ is defined to be the composite of the following two
maps:
\begin{itemize}
\item the Chern class 
  \[
  \Pic^0 Y \to \Fil^1H^2(Y,\Ql(1)) =
  \ker\bigl(H^2(Y,\Ql(1)) \to   H^2(\Ybar,\Ql(1))^{\G_k} \bigr)
  \]
\item the map 
  \[
  \Fil^1H^2(Y,\Ql(1))  \to H^1(\G_k,H^1(\Ybar,\Ql)(1))
  \]
  coming from the Hochschild-Serre spectral sequence in continuous
  $\ell$-adic cohomology.
\end{itemize}

  \begin{proof}
    Apply \cite[9.5]{Ja} to the triangle
    \[
    R\Gamma(\Xbar,\Ql) \mapright{i^*} R\Gamma(\Ybar,\Ql) \to
    R\Gamma_c(\Ubar,\Ql)[1] \to R\Gamma(\Xbar,\Ql)[1]
    \]
    to get the commutative pentagon:
    \[
    \xymatrix
    @R=25pt@C=1pt
    {
      H^0(\G_k,\ker H^2(i^*)(1))\ar[rr]&& H^1(\G_k,\coker H^1(i^*)(1))
      \\
      \ker\bigl(H^2(X,\Ql(1)) \to H^2(\Ybar,\Ql(1))^{\G_k}\bigr)\ar[u]
      \ar[rr] \ar[dr]^-{i^*} &&  H^1(\G_k,H^1(\Ybar,\Ql)(1))\ar[u]
      \\
      &\hspace*{-2cm}\ker\bigl( H^2(Y,\Ql(1)) \to
      H^2(\Ybar,\Ql(1))^{\G_k}\bigr)\hspace*{-2cm} 
      \ar[ru]
    }
    \]
    under which the various cohomology classes of $\calL$ are mapped
    as follows:
    \[
    \xymatrix
    @R=20pt@C=25pt
    {  cl_{\Xbar}(\calL)\ar@{|->}[rr] && e_{\calL}
      \\
      cl_X(\calL) \ar@{|->}[rr] \ar@{|->}[u]&& AJ_Y(i^*\calL)
    }
    \]
    The commutativity then gives the desired result.
  \end{proof}

To apply this in our situation, take $k$ as above, $X=\tilde
S'\otimes_\Q k$, $Y=\tilde S^{\prime\infty}\otimes_\Q k$, and
$\calL=\calL_\tau$. We have seen that for each $y\in S^\infty(k)$ the
restriction $\calL_\tau|_{\tilde S^{\infty}_y} \in\Pic^0(\tilde
S^\infty_y)=\Hom(\Delta_y,k^*)$ (using the isomorphism of
Lemma \ref{lem:picbdy}) is the map $\tau$ (up to a sign independent
of $y$ and $\tau$). The result then follows from the commutative
diagram
\[
\begin{CD}
\Pic^0(\tilde S^\infty_y) @>{AJ}>> \Fil^1 H^2(\tilde
S^\infty_y,\Ql(1)) & \subset & H^2(\tilde S^\infty_y,\Ql(1))
 \\
@AA{\wr}A @AA{\wr}A
 \\
H^1(\Delta_y, H^0(Z_y,\mathbb{G}_\mathrm{m}) @. 
H^1(\Delta_y, H^1(Z_y,\Ql(1))
 \\
\Vert && \Vert
 \\
H^1(\Delta_y,k^*) @>{H^1(\kappa_k)}>> H^1(\Delta_y,H^1(\G_k,\Ql(1))
\end{CD}
\]
where the right hand vertical isomorphism comes from the Cartan
spectral sequence for $Z_y \to \tilde S^\infty_y$.
\end{proof}

\section{Further remarks}

We may perform the same computations in Hodge theory. The proof of
the splitting in Theorem \ref{thm:main1} carries through without change. For the
proof of Theorem \ref{thm:main2}, one should replace absolute
$\ell$-adic cohomology with absolute Hodge cohomology
\cite{BeAH}. Then the Kummer homomorphism $\kappa'_F$ is replaced by
the archimedean regulator map
\begin{align*}
\kappa'_{F,\mathcal{H}} \colon F^* 
& \to H^1_{\mathcal H}(\Spec\C,\R(1)^\Sigma) = \R^\Sigma\\
x& \mapsto (\log\abs{\tau(x)})_{\tau\in\Sigma}.
\end{align*}
This gives a proof of Theorem (3.3.11) of \cite{NSPH}.  An alternative
approach is to use explicit formulae for Eisenstein cohomology, as
done in the case $r=2$ in \cite{Cas}; details will
appear elsewhere.

Suppose now that $S=S_K$ is a Shimura variety for the full group
$GL_2/F$, where $K\subset GL_2(\bbA_F^\infty)$ is a sufficiently small
open compact subgroup.  Theorems \ref{thm:main1} and \ref{thm:main2}
are equally valid in this setting.  We can now complete the proof of
the relevant part of Proposition 6.6 of \cite{NSintro}:

\begin{cor}
  There exists an action of $\Gamma_F^{pl}$ on $H^*(S,\Qlbar)$,
  extending the action of $\G_\Q$.
\end{cor}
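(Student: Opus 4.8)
The final corollary asserts that the Galois action on $H^*(S,\Qlbar)$ extends to the plectic Galois group $\G_F^{pl}$, for $S$ a Shimura variety attached to $GL_2/F$. The strategy is to reduce the assertion to the structural results just proved, namely Theorems \ref{thm:main1} and \ref{thm:main2}, which (as noted in the excerpt) hold verbatim in this setting. Recall that $\G_F^{pl}$ is built as an extension of $\G_\Q$ by the product $\prod_{\tau\in\Sigma}\G_F$ (an $r$-fold product indexed by the embeddings $\Sigma=\Hom_{\Q\text{-alg}}(F,\Qbar)$, with $\G_\Q$ permuting the factors through its action on $\Sigma$), and that producing a plectic action amounts to exhibiting, compatibly with the existing $\G_\Q$-action, commuting actions of the $r$ copies of $\G_F$ on each cohomology group.

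\textbf{Key steps.} First I would decompose $H^*(S,\Qlbar)$ according to the weight filtration and the structural description of the interior, boundary, and compactly-supported cohomology given before Theorem \ref{thm:main1}. On each pure graded piece the plectic action should be defined by transporting the known factorized structure: the classes $\eta_\tau$, being indexed by $\Sigma$, are permuted by $\G_\Q$ exactly as the factors of $\prod_\tau\G_F$ are, so $\eta_I=\bigwedge_{\tau\in I}\eta_\tau$ and hence $H^{2m}_A(S,\Qlbar)$ carry a natural plectic structure in which the $\tau$-th copy of $\G_F$ acts through the $\tau$-component. The cuspidal part $H^r_{\cusp}$ factorizes as a restricted tensor product over the local/automorphic data, which supplies the requisite $r$ commuting $\G_F$-actions; the boundary cohomology $H^n_\partial(S,\Qlbar)\simeq H^0(S^\infty,\Qlbar)\otimes\bigwedge^n\Hom(\frako_F^*,\Q)$ likewise inherits a plectic structure from the $\Sigma$-indexing. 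The crucial point is that these plectically-refined actions on the graded pieces must assemble into a genuine action on the \emph{non-split} extensions $H^n_c$ and $H^n(S)$, and this is exactly where the main theorems enter.

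\textbf{The main obstacle.} The hard part is the compatibility across the weight filtration: a priori the plectic action is only defined on $\operatorname{gr}^W H^*$, and to lift it to $H^*$ one must know that the extensions are controlled in a way compatible with the $\prod_\tau\G_F$-symmetry. In the degrees covered by Theorem \ref{thm:main1} the extensions \emph{split} uniquely as $\G_\Q$-modules, so the plectic action on the graded pieces transports to the total space with no further choice and automatically commutes with $\G_\Q$. The single genuinely non-split case is degree $2r-2$ (equivalently $H^2_c$), and here Theorem \ref{thm:main2} identifies the extension class explicitly via $\varepsilon^*\otimes\kappa'_F$; because $\kappa'_F$ arises from the Kummer map on $F^*$ through Shapiro's lemma $H^1(\G_F,\Qlbar(1))\isomarrow H^1(\G_\Q,\Qlbar^\Sigma(1))$, the class is manifestly $\Sigma$-equivariant and thus compatible with the permutation action of $\G_\Q$ on the plectic factors. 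I would therefore verify that this explicit class is fixed by the diagonal $\G_\Q$ inside $\G_F^{pl}$ and transforms correctly under the factorwise $\G_F$-actions, which shows the extension admits a plectic lift. Collecting these cases over all degrees, and invoking the multiplicativity of the plectic structure under cup product (which matches the cup-product description of $H^{2m}_A$ in the Proposition), yields the desired action of $\G_F^{pl}$ on all of $H^*(S,\Qlbar)$ extending that of $\G_\Q$, completing Proposition 6.6 of \cite{NSintro}.
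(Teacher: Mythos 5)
Your overall strategy coincides with the paper's: define the plectic action on the graded pieces ($H^*_A$, $H^*_\partial$, $H^r_\cusp$) using the $\Sigma$-indexing, use the unique splittings of Theorem \ref{thm:main1} to transport it to the total cohomology in all degrees but one, and handle the single non-split extension via Theorem \ref{thm:main2}. However, two of your steps are not yet proofs. First, for $H^r_\cusp(S,\Qlbar)$ the restricted-tensor-product factorization you invoke is a decomposition of the automorphic representation of $GL_2(\bbA_F^\infty)$, not of the Galois module; it does not by itself produce $r$ commuting actions of $\G_F$ on the $\ell$-adic cohomology. What is needed --- and what the paper cites --- is Nekov\'a\v{r}'s theorem \cite{Nss} that $H^r_\cusp(S,\Qlbar)$ is a sum of tensor inductions of $2$-dimensional representations of $\G_F$; this is a deep input, not a formal consequence of the automorphic factorization.

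Second, for the non-split extension in degree $2$ (equivalently $2r-2$), ``$\Sigma$-equivariance'' of the class $\varepsilon^*\otimes\kappa'_F$ is not the relevant criterion: the issue is whether a class in $\Ext^1_{\G_\Q}=H^1(\G_\Q,\Hom(H^2_!,H^1_\partial))$ lies in the image of the restriction map from $H^1(\G_F^{pl},-)$, i.e.\ whether the defining $1$-cocycle on $\G_\Q$ extends to the larger group $\G_F^{pl}\supset\G_\Q$. The paper settles this with a short K\"unneth computation showing that restriction $H^1(\G_F^{pl},\Ql(1)^\Sigma)\to H^1(\G_\Q,\Ql(1)^\Sigma)=H^1(\G_F,\Ql(1))$ is an isomorphism; since $\kappa'_F$ factors through the latter group by Shapiro's lemma, the class lifts canonically. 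You should also note that giving $H^0(S^\infty,\Qlbar)$ and $H^0(S,\Qlbar)$ themselves a plectic structure requires knowing that $\G_\Q$ acts on $\pi_0(S\otimes\Qbar)$ and $S^\infty(\Qbar)$ through the transfer $\G_\Q\to\G_F^{\mathrm{ab}}$ --- this is precisely where the hypothesis that $S$ is a $GL_2$-Shimura variety enters, and it fails for the varieties considered earlier in the paper.
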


\begin{proof}
We recall some definitions and facts from \cite{NSintro} concerning the
``plectic Galois group'', which is the group
\[
\G_F^{pl}=\Aut(F\otimes_\Q\Qbar/F).
\]
It canonically contains $\G_\Q$ as a subgroup. After fixing
embeddings $\bar \tau \colon \overline F \to \Qbar$ extending
$\tau\in\Sigma$ one obtains an isomorphism with the wreath product
\[
\G_F^{pl} \isomarrow \G_F^\Sigma \ltimes \Sym(\Sigma).
\]
The homomorphism $\G_F^\Sigma \ltimes \Sym(\Sigma)\to
\G_F^{\mathrm{ab}}$ which is trivial on the symmetric group and on
each copy of $\G_F$ is the obvious quotient defines a homomorphism
$\G_F^{pl}\to G_F^{\mathrm{ab}}$ which does not depend on choices, and
whose restriction to $\G_\Q$ is the transfer homomorphism
$\mathrm{Ver}\colon \G_\Q \to \G_F^{\mathrm{ab}}$. 

The action on $\G_\Q$ on both $\pi_0(S\otimes\Qbar)$ and
$S^\infty(\Qbar)$ factors\footnote{
  It is here that we use the fact that $S$ is a $GL_2$-Shimura
  variety. For the varieties considered earlier, this is false; see
  \cite[(0.3)]{NCM}.}
 through $\mathrm{Ver}$, and so extends to
$\G_F^{pl}$. The subspace of $H^2_!(S,\Ql)$ spanned by the classes
$\eta_\tau$ is the induced representation
$\Ind_{\G_F}^{\G_\Q}\Ql(-1)=\Ql(-1)^\Sigma$, and more generally the
subspace of $H^{2m}_!(S,\Ql)$ spanned by the products $\eta_I$ is the
degree $m$ part of the tensor induction
$(\Ql(0)\oplus\Ql(-1))^{\otimes\Sigma}$, with $\Ql(-i)$ in degree
$i$, so extends (canonically) to a representation of $\G_F^{pl}$. It
follows from all of this that there is a canonical action of the
plectic Galois group on $H^*_\partial(S,\Ql)$ and $H^*_A(S,\Ql)$.

The main result of \cite{Nss} shows that $H^r_{\cusp}(S,\Qlbar)$ is a
sum of tensor inductions of $2$-dimensional representations of
$\Gamma_F$, and therefore carries a (noncanonical) action of the
plectic Galois group extending that of $\Gamma_\Q$. To complete the
proof, in view of Theorems \ref{thm:main1} and \ref{thm:main2} it is
therefore enough to show that the action of $\G_\Q$ on $H^2_!(S,\Ql)$
can be extended to the plectic Galois group. Since $\varepsilon^*$ is
$\G_F^{pl}$-equivariant, this follows from:

\begin{lem}
  The restriction homomorphism 
  \[
  H^1(\G_F^{pl},\Ql(1)^\Sigma)  \to
  H^1(\G_\Q,\Ql(1)^\Sigma)= H^1(\G_F,\Ql(1))
  \]
  is an isomorphism.
\end{lem}

This is a consequence of the K\"unneth formula:
\begin{align*}
H^1(\G_F^{pl},\Ql(1)^\Sigma) \simeq
H^1(\G_F^\Sigma,\Ql(1)^\Sigma)^{\Sym(\Sigma)}
&= \bigl(H^1(\G_F,\Ql(1))^{\Sigma}\bigr)^{\Sym(\Sigma)}
\\
&=H^1(\G_F,\Ql(1)). \qedhere
\end{align*}
\end{proof}

\vspace*{10mm}
 \noindent\textsc{Department of Pure Mathematics and Mathematical Statistics\\
   Centre for Mathematical Sciences\\
   Wilberforce Road\\
   Cambridge\enspace CB3 0WB
}


\begin{thebibliography}{9999}

\bibitem{BeAH} A. A. Beilinson: \emph{Notes on absolute Hodge
  cohomology.} Applications of algebraic $K$-theory to algebraic
  geometry and number theory (Contemporary Mathematics \textbf{55}
  (1986)), 35--68

\bibitem{Cas} A. Caspar: \emph{Realisations of
    Kummer-Chern-Eisenstein-Motives}. Manuscripta mathematica
  \textbf{122} (2007), 23--57

\bibitem{HaSL2} G. Harder: \emph{On the cohomology of $SL_2(\frako)$}.
  In: Lie groups and their representations. Adam Hilger Ltd., London,
  1975, 139--150

\bibitem{Hi} H. Hida: $p$-adic Automorphic Forms  on Shimura Varieties. Springer, 2004

\bibitem{Ja} U. Jannsen: Mixed Motives and Algebraic
    $K$-Theory. Lecture Notes in Mathematics \textbf{1400}, Springer,
  1990

\bibitem{NCM} J. Nekov\'a\v{r}:  \emph{Hidden symmetries in the theory
    of complex multiplication}. In: Algebra, Arithmetic and Geometry -
  Manin Festschrift, vol. II (Yu. Tschinkel, Yu. Zarhin, eds.),
  Progress in Math. 270, Birkhäuser, 2009, pp. 399--438

\bibitem{Nss} --- : \emph{Eichler-Shimura relations and
    semisimplicity of etale cohomology of quaternionic Shimura
    varieties}.  Ann. Sci. Éc. Norm. Supér. (4) \textbf{51} (2018), no. 5, 1179--1252

\bibitem{NSintro} ---, A. J. Scholl: \emph{Introduction to plectic
    cohomology.} In: Advances in the Theory of Automorphic Forms and
  Their L-functions (Dihua Jiang, F. Shahidi, D. Soudry eds.).
  Contemporary Mathematics vol. 664, American Math. Society, 2016,
  321--337

\bibitem{NSPH} ---, ---: \emph{Plectic Hodge Theory I}. Preprint
  (April 2017)

\bibitem{Ra} M. Rapoport: \emph{Compactifications de l'espace de
    modules de Hilbert-Blumenthal.} Compositio Math. \textbf{36}
  (1978), 255--335

\bibitem{Si} J. Silliman: \emph{Irrational periods of Hilbert Eisenstein
  series.} Preprint (February 2019)

\bibitem{vdG} G. van der Geer: Hilbert modular surfaces. Springer, 1988


\end{thebibliography}
\end{document}